\documentclass[12pt]{article}
\usepackage[utf8]{inputenc}
\usepackage{amsmath,amssymb,amsfonts,amsthm,fancyhdr}
\usepackage[numbers,sort&compress]{natbib}
\usepackage{url}
\usepackage{ifthen}
\usepackage{color}
\usepackage{times}
\usepackage{ifpdf}
\usepackage{enumitem}
\usepackage{fullpage}
\usepackage{graphicx}

\newtheorem{theorem}{Theorem}
\newtheorem{lemma}[theorem]{Lemma}
\newtheorem{corollary}[theorem]{Corollary}
\newtheorem{conjecture}{Conjecture}

\newcommand{\edim}{{\rm edim}}
\newcommand{\mdim}{{\rm mdim}}

\parindent 21pt

\title{On Metric Dimensions of Hypercubes\footnotetext{\small\tt aleksander.kelenc@um.si, aodenteo@gmail.com, skrekovski@gmail.com,\\ \hspace*{0.62cm}and ismael.gonzalez@uca.es}}

\author{Aleksander Kelenc$^{1,2}$, Aoden Teo Masa Toshi$^3$, Riste \v Skrekovski$^{4,5}$,\\ and Ismael G. Yero$^6$\\[0.3cm]
\small $^1$ \it University of Maribor, FERI, 2000 Maribor, Slovenia \\[0.1cm]
\small $^2$ \it Institute of Mathematics, Physics and Mechanics, 1000 Ljubljana, Slovenia \\[0.1cm]
\small $^3$ \it Independent researcher, Singapore\\[0.1cm]
\small $^4$ \it University of Ljubljana, FMF, 1000 Ljubljana, Slovenia \\[0.1cm]
\small $^5$ \it Faculty of Information Studies, 8000 Novo Mesto, Slovenia \\[0.1cm]
\small $^6$ \it Universidad de C\'adiz, Departamento de Matem\'aticas, Algeciras, Spain\\
}

\date{\today}

\begin{document}

\maketitle
\begin{abstract}
The metric (resp. edge metric or mixed metric) dimension of a graph $G$, is the cardinality of the smallest ordered set of vertices that uniquely recognizes all the pairs of distinct vertices (resp. edges, or vertices and edges) of $G$ by using a vector of distances to this set. In this note we show two unexpected results on hypercube graphs. First, we show that the metric and edge metric dimension of $Q_d$ differ by only one for every integer $d$. In particular, if $d$ is odd, then the metric and edge metric dimensions of $Q_d$ are equal. Second, we prove that the metric and mixed metric dimensions of the hypercube $Q_d$ are equal for every $d \ge 3$. We conclude the paper by conjecturing that all these three types of metric dimensions of $Q_d$ are equal when $d$ is large enough.
\end{abstract}

{\it Keywords:} Edge metric dimension; mixed metric dimension; metric dimension; hypercubes.

{\it AMS Subject Classification numbers:}   05C12; 05C76

\section{Introduction}

The metric dimension of connected graphs was introduced about 50 years ago in \cite{Harary1976,Slater1975}, in connection with modeling navigation systems in networks, although this notion was already known by then for general metric spaces from \cite{Blumenthal1953}. Given a connected graph $G$ and two vertices $u,v\in V(G)$, the \emph{distance} $d_G(u,v)$ between these two vertices is the length of a shortest path connecting $v$ and $u$. The vertices $u,v$ are \emph{distinguished} (\emph{determined} or \emph{resolved}) by a vertex $x\in V(G)$ if $d_G(u,x)\ne d_G(v,x)$. A given set of vertices $S$ is a \emph{metric generator} for the graph $G$, if every two vertices of $G$ are distinguished by a vertex of $S$. The cardinality of the smallest possible metric generator for $G$ is the \emph{metric dimension} of $G$, which is denoted by $\dim(G)$. The terminology of metric generators was introduced in \cite{Khuller1996}, since the previous two works referred to such sets as \emph{resolving sets} and \emph{locating sets}, respectively. A metric generator for $G$ of cardinality $\dim(G)$ is called a \emph{metric basis}. Although the classical metric dimension is an old topic in graph theory, there are still several open problems that remain unsolved. Recent investigations on this concern are \cite{Geneson2020,Gutin2020,Jiang2019,Laird2020}.

In order to uniquely identify the edges of a graph, by using vertices, the edge metric dimension of connected graphs was introduced in \cite{Kelenc2018} as follows. Let $G$ be a connected graph and let $uv$ be an edge of $G$ such that $u,v \in V(G)$. The \emph{distance} between a vertex $x\in V(G)$ and the edge $uv$ is defined as, $d_G(uv, x) = \min \{d_G(u,x), d_G(v,x)\}.$
It is said that two distinct edges $e_1, e_2 \in E(G)$ are \emph{distinguished} or \emph{determined} by a vertex $v\in V(G)$ if $d_G(e_1, v) \ne d_G(e_2, v).$ A set $S\subset V(G)$ is called an \emph{edge metric generator} for $G$ if and only if for every pair of edges $e_1, e_2 \in E(G)$, there exists an element of $S$ which distinguishes the edges. The cardinality of a smallest possible edge metric generator of a graph is known as the \emph{edge metric dimension}, and is denoted by $\edim(G)$. After the seminal paper \cite{Kelenc2018}, a significant number of researches on such parameter have appeared. Among them, some of the most recent ones are \cite{Geneson2020,Klavzar2020,Knor2021-a,Knor2021-b,Peterin2020}. It is natural to consider comparing the metric and edge metric dimensions of graphs. However, as first proved in \cite{Kelenc2018}, and continued in \cite{Knor2021-a,Knor2021-b}, both parameters are not in general comparable existing connected graphs $G$ for which $\edim(G)<\dim(G)$, $\edim(G)=\dim(G)$ or $\edim(G)>\dim(G)$.

In order to combine the unique identification of vertices and of edges, in only one scheme, the mixed metric dimension of graphs was introduced in \cite{Kelenc2017}. For a connected graph $G$, a vertex $w\in V(G)$ and an edge $uv\in E(G)$ are \emph{distinguished} or \emph{determined} by a vertex $x\in V(G)$ if $d_G(w, x) \ne d_G(uv, x)$. A set $S\subset V(G)$ is called an \emph{mixed metric generator} for $G$ if and only if for every pair of elements of the graphs (vertices or edges) $e,f \in E(G)\cup V(G)$, there exists a vertex of $S$ which distinguishes them. The cardinality of a smallest possible mixed metric generator of $G$ is known as the \emph{mixed metric dimension} of $G$, and is denoted by $\mdim(G)$. Some recent studies on mixed metric dimension of graphs are \cite{Sedlar2020-a,Sedlar2020-b}. Clearly, every mixed metric generator must be a metric generator as well as an edge metric generator, and so, $\mdim(G)\ge \max\{\dim(G),\edim(G)\}$, for any connected graph $G$. In consequence (and due to the not comparability between $\dim(G)$ and $\edim(G)$), several situations relating these three parameters can be found. That is, graphs $G$ with $\mdim(G)>>\max\{\dim(G),\edim(G)\}$, $\mdim(G)=\dim(G)>>\edim(G)$, $\mdim(G)=\edim(G)>>\dim(G)$, or $\mdim(G)=\dim(G)=\edim(G)$.

The metric dimension of hypercube graphs has attracted the attention of several researchers from long ago. For instance, the work of Lindstr\"om \cite{Lindstrom1965} is probably one of the oldest ones, and for some recent ones we suggest the works \cite{Hertz2020,Nikolic2017,Zhang2020}. Surprisingly, for other related invariants, there has been comparatively little research on hypercube graphs, although one can find some interesting recent results on this topic, such as those that appear in \cite{Hakanen2020,Hertz2020}. It is our goal to present some results on the closed connections that exist among the metric, edge metric and mixed metric dimensions of hypercube graphs.

The $d$-dimensional hypercube, denoted by $Q_d$, with $d\in \mathbb{N}$, is a graph whose vertices are represented by $d$-dimensional binary vectors, \emph{i.e.}, $u=(u_1,\dots,u_2)\in V(Q_d)$ where $u_i\in \{0,1\}$ for every $i\in\{1,\dots,d\}$. Two vertices are adjacent in $Q_d$ if their vectors differ in exactly one coordinate. Hypercubes can be also seen as the $d$ times Cartesian product of the graph $P_2$, that is, $Q_d\cong P_2\Box P_2\Box \cdots \Box P_2$, or recursively, $Q_d\cong Q_{d-1}\Box P_2$. The distance between two vertices in $Q_d$ represents the total number of coordinates in which their vectors differ. The hypercube $Q_d$ is bipartite, and has $2^d$ vertices and $d\cdot 2^{d-1}$ edges. We remark that for instance, $Q_2$ is the cycle $C_4$, and that $Q_4$ can be also seen as the torus graphs $C_4\Box C_4$.

\section{Results}

Our first contribution is to relate the metric generators with the edge metric generators of bipartite graphs.

\begin{lemma}
\label{lem:edim-less-dim}
Let $G$ be a connected bipartite graph. Then, every metric generator for $G$ is also an edge metric generator.
\end{lemma}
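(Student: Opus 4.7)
The plan is to prove the contrapositive-style statement by contradiction: assume $S$ is a metric generator, pick two distinct edges $e_1 = u_1v_1$ and $e_2 = u_2v_2$, and show some vertex of $S$ distinguishes them. The central tool is the parity rigidity of distances in a bipartite graph.

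Let $A \cup B$ be the bipartition of $G$. The first step is to record the following well-known fact for any edge $e = uv$ and any vertex $x$: since $u$ and $v$ lie in different parts, the distances $d_G(u,x)$ and $d_G(v,x)$ have opposite parities, and since $u$ and $v$ are adjacent, they differ by at most $1$; hence they differ by \emph{exactly} $1$. Moreover, which of the two is even (and hence which is the smaller one, namely $d_G(e,x)$) is determined solely by which part of the bipartition $x$ lies in: if $u\in A$, then $d_G(u,x)$ is even iff $x\in A$. Consequently, once we fix which endpoint of $e$ lies in $A$, the single number $d_G(e,x)$ together with the part containing $x$ uniquely recovers both $d_G(u,x)$ and $d_G(v,x)$.

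The second step is to apply this rigidity to the two edges. Relabelling if necessary, we may assume $u_1,u_2\in A$ and $v_1,v_2\in B$. Suppose, for contradiction, that no vertex of $S$ distinguishes $e_1$ from $e_2$, so that $d_G(e_1,x)=d_G(e_2,x)$ for every $x\in S$. Applying the observation from the previous paragraph to each $x\in S$ (splitting according to whether $x\in A$ or $x\in B$, in both cases the assignment of parities to the $A$-endpoints and $B$-endpoints is the same for $e_1$ and $e_2$), we conclude that $d_G(u_1,x)=d_G(u_2,x)$ and $d_G(v_1,x)=d_G(v_2,x)$ for every $x\in S$.

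The third step closes the argument: since $e_1\neq e_2$, we have $u_1\neq u_2$ or $v_1\neq v_2$. But $S$ is a metric generator, so any pair of distinct vertices is distinguished by some element of $S$, contradicting the equalities just established. Hence some vertex of $S$ must distinguish $e_1$ and $e_2$, so $S$ is an edge metric generator. I do not foresee a real obstacle here; the only subtle point is making sure the parity bookkeeping is handled uniformly regardless of whether $x\in A$ or $x\in B$, which is why it is worth stating the parity observation cleanly once at the start rather than splitting into cases on each edge-pair.
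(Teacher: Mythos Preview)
Your argument is correct and rests on the same bipartite--parity idea as the paper's proof, though you package it differently. The paper selects one endpoint from each edge so that the chosen pair is at even distance, takes $s\in S$ distinguishing that pair, and uses bipartiteness to upgrade $|d(s,u)-d(s,v)|\ge 1$ to $\ge 2$, which immediately gives $d(s,e_1)\ne d(s,e_2)$. You instead align endpoints by bipartition class ($u_1,u_2\in A$, $v_1,v_2\in B$) and run the contrapositive: if $d(e_1,x)=d(e_2,x)$ for every $x\in S$, then the set $\{d(e_i,x),d(e_i,x)+1\}$ together with the fixed parities forces $d(u_1,x)=d(u_2,x)$ and $d(v_1,x)=d(v_2,x)$ for all $x\in S$, contradicting that $S$ resolves vertices. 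The two arguments are equivalent in strength and length; yours is slightly more symmetric, while the paper's avoids naming the bipartition.

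One wording slip to fix: the parenthetical ``(and hence which is the smaller one, namely $d_G(e,x)$)'' is not literally correct. The part containing $x$ determines which of $d_G(u,x),d_G(v,x)$ is \emph{even}, but not which is \emph{smaller}; for instance on a path $u$--$v$--$x$ with $u,x\in A$ one has $d_G(u,x)=2>1=d_G(v,x)$. What you actually need, and correctly state in the following sentence, is that $d_G(e,x)$ together with the parity information recovers both endpoint distances. The proof goes through once that clause is tightened.
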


\begin{proof}
Let $S$ be an arbitrary metric generator for $G$. We will show that $S$ is an edge metric generator as well.

Let $e_1 = x_1 y_1$ and $e_2=x_2y_2$ be two arbitrary distinct edges of $G$. It is easy to choose $u\in \{x_1,y_1\}$ and $v\in \{x_2,y_2\}$ such that $u$ and $v$ are distinct and on even distance. In case $e_1$ and  $e_2$ share a vertex, say $y_1=y_2$, then let $u=x_1$ and $v=x_2$; as these vertices are on distance 2, the requirements are satisfied. And, in case $e_1$ and $e_2$ have no common vertex, then simply let $u=x_1$ and observe that precisely one of $x_2$ and $y_2$ is on even distance from $u$ as $G$ is a bipartite graph and $y_1$ and $y_2$ are adjacent. Without lost of generality assume $d(y_2,u)$ is even, and finally set $v=y_2$.

Now, as  $u$ and $v$ are distinct, there must be a vertex $s\in S$ that distinguishes them, \emph{i.e.} $d(s,u)\ne d(s,v)$. We may assume
that $d(s,u)+1\le d(s,v)$. Since $u$ and $v$ are on even distance, it follows that distances $d(s,u)$ and $d(s,v)$ are of same parity, otherwise we encounter a closed walk of odd length in $G$, which is not possible in a bipartite graph. This implies $d(s,u)+2\le d(s,v)$, and now we easily derive
              $$d(e_1,s)\le d(u,s) < d(v,s)-1\le d(e_2,s).$$
In particular,  $d(e_1,s)< d(e_2,s)$ implies that $e_1,e_2$ are distinguished by $s\in S$. Since the choice of these two edges was arbitrary,
we conclude that $S$ is also an edge metric generator.
\end{proof}

It is then natural to think in the opposite direction with regard to the result above. In particular, we ask if an edge metric generator for a bipartite graph is also a metric generator. In contrast with the result above, achieving this seems to be a challenging task. However, we have at least managed to show a weaker result for an infinite family of bipartite graphs, namely the hypercubes $Q_d$. That is, when $d$ is odd, every edge metric generator for $Q_d$ is indeed a metric generator, and when $d$ is even, every edge metric generator is ``almost” a metric generator.

From now on we denote by $\alpha_i$ the vector of dimension $d$ whose $i^{\rm th}$-coordinate is 1, and the remaining coordinates are 0. Also, by ``$\oplus$'' we represent the standard (binary) XOR operation. Notice that, for any vertex $u\in V(Q_d)$, $u\oplus \alpha_i$ means switching the $i^{\rm th}$-coordinate of $u$ from 0 to 1, or vice versa.

\begin{lemma}
\label{lem:edge-dim-equal-odd}
Let $S$ be  an edge metric generator of  $Q_d$. If $u$ and $v$ are two vertices not distinguished by $S$, then
they must be antipodal in $Q_d$ and $d$ is even. In particular,  $S$ is  a metric generator of $Q_d$ for $d$ being odd.
\end{lemma}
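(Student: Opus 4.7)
My plan is to work in the Hamming representation of $Q_d$, where $d(s,u)=|s\oplus u|$ for any two vertices, and toggling a single coordinate $j$ of a vertex $w$ changes the distance to $s$ by exactly $\pm 1$ depending on whether bit $j$ of $s\oplus w$ is $1$ or $0$ respectively. Consequently, for an edge $e=\{w,w\oplus\alpha_j\}$ the distance $d(s,e)=\min\{d(s,w),d(s,w\oplus\alpha_j)\}$ is either $d(s,w)$ or $d(s,w)-1$, and which case occurs is determined solely by bit $j$ of $s\oplus w$.

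Suppose $u\ne v$ are two vertices not distinguished by the edge metric generator $S$, so $d(s,u)=d(s,v)$ for every $s\in S$. To show that $u$ and $v$ are antipodal, I would let $I$ denote the set of coordinates on which $u$ and $v$ differ and assume for contradiction that some coordinate $j$ lies outside $I$. Then the edges $e_1=\{u,u\oplus\alpha_j\}$ and $e_2=\{v,v\oplus\alpha_j\}$ are distinct, since $u\ne v$ and $u\ne v\oplus\alpha_j$ (the latter because $j\notin I$). Because $u$ and $v$ agree at coordinate $j$, bit $j$ of $s\oplus u$ equals bit $j$ of $s\oplus v$ for every $s$, so the ``correction'' $0$ or $-1$ described above is the same on both sides. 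Combined with $d(s,u)=d(s,v)$, this yields $d(s,e_1)=d(s,e_2)$ for every $s\in S$, contradicting the edge-metric-generator property of $S$.

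Having shown that $u,v$ must be antipodal, I would then argue that $d$ is even by a parity observation. If $d$ were odd, then $d(u,v)=d$ would be odd, placing $u$ and $v$ in opposite bipartition classes of $Q_d$; any vertex $s$ (and $S$ is nonempty since $Q_d$ has edges) would then satisfy $d(s,u)\not\equiv d(s,v)\pmod 2$, hence $d(s,u)\ne d(s,v)$, contradicting that $S$ does not distinguish $u$ and $v$. So $d$ must be even. The ``in particular'' clause falls out immediately: for odd $d$ no such indistinguishable pair can exist, and so every edge metric generator is automatically a metric generator.

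I expect the main subtlety to be the choice of the pair of edges used for the contradiction. Picking $j\notin I$ is crucial in two ways: it makes $e_1\ne e_2$, and it makes the bit $j$ of $s\oplus u$ coincide with the bit $j$ of $s\oplus v$, so that the edge-distance equality $d(s,e_1)=d(s,e_2)$ follows cleanly from the vertex-distance equality $d(s,u)=d(s,v)$. Once this pair is identified, the rest is a short calculation in $\{0,1\}^d$, and the evenness of $d$ comes from the same bipartite parity argument that underlies Lemma \ref{lem:edim-less-dim}.
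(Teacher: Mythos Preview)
Your proof is correct and follows essentially the same approach as the paper's. Both arguments pick a coordinate $j$ on which $u$ and $v$ agree and show that the edges $\{u,u\oplus\alpha_j\}$ and $\{v,v\oplus\alpha_j\}$ are indistinguishable by $S$; your formulation via ``bit $j$ of $s\oplus u$ equals bit $j$ of $s\oplus v$'' is exactly the paper's case split on which half-cube (with respect to coordinate $j$) the vertex $s$ lies in, and your bipartite-parity argument for the evenness of $d$ is equivalent to the paper's use of the identity $d(u,x)+d(x,v)=d$ for antipodal $u,v$ in $Q_d$.
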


\begin{proof}
Suppose that $u=(u_1,u_2,\ldots,u_d)$ and $v=(v_1,v_2,\ldots,v_d)$ are not antipodal. Then, $u_i=v_i$ for some $i$.
Let $Q^0_{d-1}$ and $Q^1_{d-1}$ be the half-cubes regarding the dimension $i$. Notice that $u$ and $v$ belongs to a same
half-cube, say  $Q^0_{d-1}$.  Let $e_u$ and $e_v$ be the edges corresponding to the component $i$ (in $Q_d$) incident with $u$ and $v$, respectively.
In other words, as $u\oplus \alpha_i$ and $v\oplus \alpha_i$ are the neighbours of $u$ and $v$ in $Q_{d-1}^1$, we have
$e_u = (u,u\oplus \alpha_i)$ and $e_v = (v,v\oplus \alpha_i)$. We claim that the edges
 $e_u$ and $e_v$ are not distinguished by $S$. To see this, observe that if $s\in S$ belongs to $Q^0_{d-1}$, then
    $$d(s,e_u) = d(s,u) = d(s,v)= d(s,e_v).$$
Also, if $s\in S$ belongs to $Q^1_{d-1}$, then
    $$d(s,e_u) = d(s,u\oplus \alpha_i) = d(s,v\oplus \alpha_i)= d(s,e_v).$$
 We hence derive that edges  $e_u$ and $e_v$ are not distinguished by $S$, which is a contradiction.

Based on the above arguments, we conclude that $u$ and $v$ are antipodal, \emph{i.e.} $d(u,v)=d$. Hence, every vertex $x$ of $S$ satisfies
$d(u,x) + d(x,v)=d$. As every vertex $s\in S$ must be equally distanced from $u$ and $v$, we conclude that $d(u,s) =d(s,v)=d/2$, and consequently, $d$ must be even. This establishes the main claim.

Finally, observe that if $d$ is odd, then no vertex is equally distanced from two antipodal vertices of $Q_d$, and therefore, $S$ is a metric generator of $Q_d$.
\end{proof}

Next lemma will ensure that enlarging an edge metric generator of $Q_d$ with one chosen element, we get a metric generator of $Q_d$.

\begin{lemma}
\label{lem:edge-gen-dim-plus-one}
Let $S$ be an edge metric generator of $Q_d$ and let $s$ be an arbitrary element of $S$. Then,
$S\cup \{s\oplus \alpha_1\}$ is a metric generator of $Q_d$.
\end{lemma}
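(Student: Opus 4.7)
The plan is to exploit Lemma~\ref{lem:edge-dim-equal-odd} to reduce the problem to handling only antipodal pairs, and then check that the single added vertex $s':=s\oplus \alpha_1$ resolves each of those pairs by a parity argument on the first coordinate.

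First I would invoke Lemma~\ref{lem:edge-dim-equal-odd}: if $S$ is already a metric generator of $Q_d$ (which is automatic when $d$ is odd) the conclusion is trivial because adding one vertex to a metric generator yields a metric generator. Thus it remains to treat the case when $d$ is even and there exists a pair $u,v\in V(Q_d)$ not distinguished by $S$. The lemma guarantees that such $u,v$ are antipodal, i.e.\ $d(u,v)=d$, and that every $x\in S$ satisfies $d(x,u)=d(x,v)=d/2$. In particular this applies to the chosen element $s\in S$.

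Next I would establish the elementary fact, to be used as the workhorse: for every $w\in V(Q_d)$ and every coordinate $i$,
\[
d(w,\,s\oplus\alpha_i)=
\begin{cases}
d(w,s)+1 & \text{if } w_i = s_i,\\[2pt]
d(w,s)-1 & \text{if } w_i \ne s_i,
\end{cases}
\]
since flipping the $i^{\mathrm{th}}$ coordinate of $s$ changes its agreement with $w$ on that one coordinate only, and the Hamming distance is the number of disagreements. With this in hand, fix an undistinguished antipodal pair $u,v$. Because $u$ and $v$ are antipodal, $u_1\ne v_1$, so exactly one of them, say $u$, satisfies $u_1=s_1$ (the other case is symmetric). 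Applying the identity with $i=1$ to both $u$ and $v$ gives
\[
d(u,s\oplus\alpha_1)=d(u,s)+1=\tfrac{d}{2}+1,\qquad d(v,s\oplus\alpha_1)=d(v,s)-1=\tfrac{d}{2}-1,
\]
so $s\oplus\alpha_1$ distinguishes $u$ from $v$.

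Finally, I would note that any pair of vertices already distinguished by some element of $S$ is still distinguished by that same element in $S\cup\{s\oplus\alpha_1\}$. Combining this with the previous paragraph, every pair of distinct vertices of $Q_d$ is distinguished by a vertex of $S\cup\{s\oplus\alpha_1\}$, i.e.\ this enlarged set is a metric generator. The argument is short and I do not anticipate a real obstacle; the only subtlety is remembering to separate the $d$ odd case (where nothing needs to be done) from the $d$ even case, and to use the fact that $s$ itself lies in $S$ so that the equidistance conclusion of Lemma~\ref{lem:edge-dim-equal-odd} applies to $s$.
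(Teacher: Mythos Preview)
Your proof is correct and follows essentially the same route as the paper: reduce via Lemma~\ref{lem:edge-dim-equal-odd} to undistinguished antipodal pairs in even dimension, then observe that $s\oplus\alpha_1$ separates any such pair because $u_1\ne v_1$ forces $|d(u,s\oplus\alpha_1)-d(v,s\oplus\alpha_1)|=2$. The only difference is that you spell out the Hamming-distance identity behind that last equality, whereas the paper simply asserts it.
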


\begin{proof}
If $S$ is a metric generator of $Q_d$, then $S\cup \{s\oplus \alpha_1\}$ is so too, and we are done. Thus, we assume that $S$ is not a
metric generator of $Q_d$. Then, by Lemma \ref{lem:edge-dim-equal-odd}, $d$ is even and there must exist antipodal vertices $u$ and $v$
such that $d(u,x) = d(v,x)=d/2$ for every $x\in S$. This will not be a case for $s\oplus \alpha_1$, as $|d(u,s\oplus \alpha_1) - d(v,s\oplus \alpha_1)|=2$.
Therefore, we conclude that $S\cup \{s\oplus \alpha_1\}$ is a metric generator of $Q_d$.
 \end{proof}

Since $Q_d$ is a bipartite graph, the two previous lemmas give us the following consequence.

\begin{theorem}
\label{th:dim-edim}
For every $d\ge 1$ it holds
            $$\edim(Q_d) \le  \dim(Q_d) \le \edim(Q_d) + 1, $$
with the second inequality being tight only if $d$ is even.
\end{theorem}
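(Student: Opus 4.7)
The plan is to assemble the theorem as a direct corollary of the three preceding lemmas; the statement is really a packaging result, so no further graph-theoretic argument is needed.

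For the left inequality $\edim(Q_d)\le\dim(Q_d)$, I would start from a metric basis $S$ of $Q_d$. Since $Q_d$ is bipartite, Lemma~\ref{lem:edim-less-dim} immediately yields that $S$ is also an edge metric generator, whence $\edim(Q_d)\le |S|=\dim(Q_d)$. For the right inequality $\dim(Q_d)\le\edim(Q_d)+1$, I would instead start from an edge metric basis $S$ and pick any $s\in S$; Lemma~\ref{lem:edge-gen-dim-plus-one} then guarantees that $S\cup\{s\oplus\alpha_1\}$ is a metric generator, and its cardinality is at most $|S|+1=\edim(Q_d)+1$. The only subtlety is that $s\oplus\alpha_1$ might already lie in $S$, but that only improves the bound.

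For the tightness claim, I would apply Lemma~\ref{lem:edge-dim-equal-odd}: when $d$ is odd, every edge metric generator of $Q_d$ is automatically a metric generator. Applied to an edge metric basis, this gives $\dim(Q_d)\le\edim(Q_d)$, which together with the first inequality forces $\dim(Q_d)=\edim(Q_d)$ and so the right-hand bound is strict. Hence the second inequality can be tight only when $d$ is even.

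Since the real work was already done in the three lemmas, I do not anticipate a genuine obstacle. The only thing to be careful about is the logical form of the tightness statement: ``tight only if $d$ is even'' is the contrapositive of ``$d$ odd implies $\dim(Q_d)=\edim(Q_d)$,'' which is exactly what Lemma~\ref{lem:edge-dim-equal-odd} supplies.
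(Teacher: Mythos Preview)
Your proposal is correct and follows essentially the same approach as the paper: both inequalities and the tightness condition are derived directly from Lemmas~\ref{lem:edim-less-dim}, \ref{lem:edge-dim-equal-odd}, and \ref{lem:edge-gen-dim-plus-one}, with the paper's own proof being an even terser version of what you wrote. Your extra remark that $s\oplus\alpha_1$ might already lie in $S$ is a harmless clarification that only helps the bound.
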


\begin{proof}
The lower bound holds by Lemma \ref{lem:edim-less-dim}. The upper bound and its possible tightness (for more than one case) follows by
Lemmas \ref{lem:edge-dim-equal-odd} and \ref{lem:edge-gen-dim-plus-one}.
 \end{proof}

Notice that the upper bound $\dim(Q_d) \le \edim(Q_d) + 1$ is indeed tight for the case $Q_4$, since $4=\dim(Q_4)=\edim(Q_4) + 1$, as proved in \cite{Kelenc2018}.\\

We now turn our attention to relating the metric dimension with the mixed metric dimension of hypercubes. To this end, we will need the following two results. We must remark that the first of next two lemmas already appeared in \cite{Nikolic2017}. We include its proof here by completeness of our paper.

\begin{lemma}
\label{lemma:antipodal_basis}
If $S$ is a resolving set (metric basis) of $Q_d$ and $s \in S$, then $(S\setminus \{s\}) \cup \{s'\}$ is also a resolving set (metric basis) of $Q_d$, where $s'\in V(Q_d)$ is the antipodal vertex of $s$.
\end{lemma}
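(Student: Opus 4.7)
The plan is to exploit the following key identity for hypercubes: for any vertex $x \in V(Q_d)$, its antipode $x'$ is obtained by flipping every coordinate of $x$, so for any vertex $w \in V(Q_d)$ we have
\[
d_{Q_d}(x,w) + d_{Q_d}(x',w) = d,
\]
since $w$ differs from $x$ in exactly those coordinates in which it agrees with $x'$. Thus $d(x',w) = d - d(x,w)$, a relation that preserves strict inequalities.

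With this in hand, the proof is essentially mechanical. Let $S' = (S \setminus \{s\}) \cup \{s'\}$. To show $S'$ resolves $Q_d$, pick any two distinct vertices $u,v \in V(Q_d)$. Since $S$ is a resolving set, there exists $t \in S$ with $d(t,u) \ne d(t,v)$. If $t \ne s$, then $t \in S'$ and we are done. Otherwise $t = s$, so $d(s,u) \ne d(s,v)$; applying the antipodal identity to both sides gives
\[
d(s',u) = d - d(s,u) \ne d - d(s,v) = d(s',v),
\]
so $s' \in S'$ distinguishes $u$ from $v$. Hence $S'$ is a resolving set, and since $|S'| = |S|$, it is a metric basis whenever $S$ is.

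There is essentially no obstacle here beyond noting the antipodal distance identity; the rest is a one-line case split. The only small subtlety to mention is that $s' \ne s$ (which holds because the diameter of $Q_d$ is $d \ge 1$), so $|S'| = |S|$ and the "metric basis" version of the statement follows immediately from the "resolving set" version.
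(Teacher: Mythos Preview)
Your argument is essentially the paper's: both rely on the antipodal identity $d(s',w)=d-d(s,w)$ to transfer the distinguishing property from $s$ to $s'$.

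There is one small slip in the metric-basis clause. You assert $|S'|=|S|$ because $s'\ne s$, but that is not sufficient: you also need $s'\notin S\setminus\{s\}$, i.e.\ $s'\notin S$. If it happened that $s'\in S$, then $(S\setminus\{s\})\cup\{s'\}=S\setminus\{s\}$ would have cardinality $|S|-1$. The paper closes this by observing (from the same identity) that a metric basis of $Q_d$ can never contain a pair of antipodal vertices, since then deleting one of them would leave a strictly smaller resolving set, contradicting minimality. With that remark added, your proof is complete and matches the paper's.
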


\begin{proof}
If $s\in S$ distinguishes some pair of vertices $x$ and $y$ of $Q_d$, then $s'$ distinguishes such pair as well, since $d(x,s') =  d - d(x,s)$ and $d(y,s') =  d - d(y,s)$. This also means that no metric basis of $Q_d$ contains two antipodal vertices. Thus, if $S$ is a metric generator (or a metric basis) of $Q_d$, then $S\setminus \{s\}\cup \{s'\}$ is so too.
\end{proof}

\begin{lemma}\label{lemma:two_columns}
If $S$ is a metric generator of $Q_d$, then there is at most one index $i \in \{ 1, \ldots, d \}$ such that all the vertices from $S$ have the same value at the $i^{\rm th}$ coordinate.
\end{lemma}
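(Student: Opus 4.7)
The plan is a direct proof by contradiction. Suppose there are two distinct indices $i, j \in \{1,\ldots,d\}$ and fixed values $a, b \in \{0,1\}$ such that every vertex $s \in S$ satisfies $s_i = a$ and $s_j = b$. I will exhibit two vertices of $Q_d$ that $S$ fails to distinguish, contradicting the assumption that $S$ is a metric generator.

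The key idea is to pick a pair of vertices $u, v$ that differ in \emph{exactly} the two coordinates $i$ and $j$, and in such a way that $u$ agrees with $S$ in coordinate $i$ but disagrees in coordinate $j$, while $v$ does the opposite. Concretely, I would take $u$ and $v$ to be equal to a fixed vector (say the all-zero vector) in every coordinate other than $i$ and $j$, and set $u_i = a$, $u_j = 1-b$, $v_i = 1-a$, $v_j = b$. In particular, $u \ne v$.

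Since Hamming distance in $Q_d$ decomposes coordinate-wise, for any $s \in S$ we obtain
$$d(u,s) = [u_i \ne s_i] + [u_j \ne s_j] + \sum_{k \ne i,j} [u_k \ne s_k] = 0 + 1 + T,$$
$$d(v,s) = [v_i \ne s_i] + [v_j \ne s_j] + \sum_{k \ne i,j} [v_k \ne s_k] = 1 + 0 + T,$$
where $T = \sum_{k \ne i,j} [s_k \ne 0]$ is the same for $u$ and $v$ because they agree outside coordinates $i$ and $j$. Hence $d(u,s) = d(v,s)$ for every $s \in S$, so $u$ and $v$ are not distinguished by $S$, contradicting that $S$ is a metric generator.

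There is no real obstacle here; the only thing to be careful about is the asymmetric choice of values at coordinates $i$ and $j$ (one matching and one mismatching on each side), which is precisely what makes the two offsets from the common term $T$ cancel out. The argument works uniformly in $a, b \in \{0,1\}$, and the conclusion is that at most one coordinate can be constant across $S$.
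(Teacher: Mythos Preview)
Your proof is correct and follows essentially the same approach as the paper's own argument: assume two coordinates are constant across $S$, then exhibit two vertices differing only in those two coordinates (with the values swapped so that each disagrees with $S$ in exactly one of the two positions), and conclude they are equidistant from every $s\in S$. The only difference is cosmetic: the paper reduces by symmetry to the case $a=b=0$ and takes the unit vectors $e_i$ and $e_j$, whereas you treat general $a,b$ directly.
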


\begin{proof}
Suppose that there exist two different indices $i$ and $j$ such that all vertices from $S$ have the same value at the $i^{\rm th}$ and $j^{\rm th}$ coordinates. In the proof, we proceed with the case when there are zeroes at such coordinates. The other cases can be shown by using similar arguments. Now, let $x\in V(Q_d)$ be a vertex having zeroes at all coordinates, except at the $i^{\rm th}$, and let $y$ be a vertex having zeroes at all positions except at the $j^{\rm th}$. Then, $d(x,s)=d(y,s)$ for any vertex $s \in S$, a contradiction.
\end{proof}

The mixed metric dimension of hypercubes $Q_1$ and $Q_2$ are $2$ and $3$, respectively. This can be derived from results for paths and cycles from \cite{Kelenc2017}. This gives us that $\dim(Q_d)<\mdim(Q_d)$, for $d \in\{1,2\}$. For all higher dimensions the mixed metric dimension is equal to the metric dimension as we next show.

\begin{theorem}
\label{th:dim-mdim}
For every $d\ge 3$ it holds,
            $$\dim(Q_d) = \mdim(Q_d).$$
\end{theorem}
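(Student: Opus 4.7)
The plan is to prove the non-trivial direction $\mdim(Q_d)\le\dim(Q_d)$ by exhibiting a mixed metric generator of size $\dim(Q_d)$. Starting from an arbitrary metric basis $S$ of $Q_d$, note that Lemma~\ref{lem:edim-less-dim} (using that $Q_d$ is bipartite) already ensures $S$ distinguishes every pair of edges, and $S$ distinguishes every pair of vertices by definition. Hence only vertex--edge collisions need to be ruled out: pairs $(w,e)$ with $e=u(u\oplus\alpha_i)$ and $d(s,w)=d(s,e)$ for all $s\in S$.

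The central step is a parity argument showing that no such collision can occur provided $S$ has both values $0$ and $1$ in every coordinate. Splitting $S$ into $S_0=\{s:s_i=u_i\}$ and $S_1=\{s:s_i\ne u_i\}$, a short Hamming-distance computation gives $d(s,e)=d(s,u)$ for $s\in S_0$ and $d(s,e)=d(s,u)-1$ for $s\in S_1$. Writing $\Delta=\{j:w_j\ne u_j\}$ and using the identity $d(s,u)-d(s,w)=2\,|\{j\in\Delta:s_j\ne u_j\}|-|\Delta|$, the equation $d(s,w)=d(s,e)$ forces $|\Delta|$ to be even for every $s\in S_0$ and odd for every $s\in S_1$. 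If both classes are non-empty, this is incompatible, so no collision exists. By Lemma~\ref{lemma:two_columns}, at most one coordinate $i^*$ can violate the ``both values present'' hypothesis; if none does, we are already finished.

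If one ``bad'' coordinate $i^*$ does exist, the plan is to apply Lemma~\ref{lemma:antipodal_basis} iteratively: for any $T\subseteq S$, replacing each $s\in T$ by its antipode yields a metric basis $S_T$ of the same cardinality. For each coordinate $j$, exactly two subsets of $S$ (namely $\{s:s_j=0\}$ and $\{s:s_j=1\}$) make column $j$ of $S_T$ constant, so at most $2d$ choices of $T$ are unusable; any of the remaining $2^{|S|}-2d$ subsets yields a metric basis with both values in every coordinate, which by the previous step is a mixed metric generator. The main obstacle is justifying the counting inequality $2^{\dim(Q_d)}>2d$ for every $d\ge 3$---exactly the range in the theorem, since $d=1,2$ were handled separately just before the statement. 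For small dimensions this follows from the known values $\dim(Q_3)=3$, $\dim(Q_4)=4$, $\dim(Q_5)=4,\ldots$; for large $d$ it follows from the classical Erd\H{o}s--R\'enyi--Lindstr\"om growth $\dim(Q_d)=\Theta(d/\log d)$, which comfortably exceeds $\log_2(2d)$.
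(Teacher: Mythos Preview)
Your overall strategy coincides with the paper's: both arguments reduce the problem to producing a metric basis $S$ in which every coordinate takes both values $0$ and $1$, and both use Lemma~\ref{lemma:antipodal_basis} (antipodal swaps) to manufacture such a basis. Your unified parity computation for vertex--edge pairs is correct and is essentially the paper's argument; the paper merely splits it into the non-incident case (handled by any metric basis via the same parity trick) and the incident case (which forces the ``both values in every column'' condition).

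The genuine difference is in the last step. The paper, assuming $d\ge 5$ so that $|S|\ge 4$, picks four specific swap sets $S_1',\dots,S_4'$ and shows that if all four fail then two explicit vertices are unresolved by $S$. This needs nothing beyond $\dim(Q_d)\ge 4$. You instead count: among the $2^{|S|}$ subsets $T\subseteq S$, at most $2d$ produce a constant column in $S_T$, so some $T$ works provided $2^{\dim(Q_d)}>2d$.

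That inequality is where your argument has a real gap. You call it ``the main obstacle'' and then defer to ``known values $\dim(Q_3)=3,\dim(Q_4)=4,\dim(Q_5)=4,\ldots$'' together with the asymptotic $\dim(Q_d)=\Theta(d/\log d)$. Neither half, as written, is a proof: the ellipsis hides an unbounded case check, and the $\Theta$-statement carries no explicit constants or threshold. The elementary pigeonhole bound $\dim(Q_d)\ge d/\log_2(d+1)$ does \emph{not} imply $2^{\dim(Q_d)}>2d$ until roughly $d\ge 29$, so you would still need to certify all $3\le d\le 28$ from a table. Until that is done, the argument is incomplete. The paper's four-swap construction is precisely what buys you freedom from this numerical verification: it requires only $|S|\ge 4$, which is immediate for $d\ge 5$, with $d=3,4$ handled by exhibiting explicit mixed bases.
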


\begin{proof}
First, $\{(1,1,1),(0,1,0), (0,0,1)\}$ and $\{(1,1,1,1),(0,1,0,0),(0,0,1,0),(0,0,0,1)\}$ are mixed metric bases for $Q_3$ and $Q_4$, respectively. Thus, the equality follows for these cases since $\dim(Q_3)=3$ and $\dim(Q_4)=4$. It remains to check the equality for $d\geq 5$.

Let $S$ be a metric basis for $Q_d$ with $d\geq 5$.
By Lemma \ref{lem:edim-less-dim}, $S$ is an edge metric generator of $Q_d$. In this sense, we only need to distinguish in $Q_d$, those pairs of elements, one of them being a vertex and the other one, an edge. For this, let $u$ be an arbitrary vertex and let $e=xy$ be an arbitrary edge of $Q_d$.

Suppose first that $u$ is not a vertex of $e$. As $d(u,x)$ and $d(u,y)$ are of different parity, we may assume that
$u$ and $x$ are on even distance. Now, let $s_i$ be a vertex from $S$ that distinguishes $u$ and $x$. Similarly, as in Lemma \ref{lem:edim-less-dim}, notice that $d(s_i,u)$ and $d(s_i,x)$ are of the same parity, and as they are different, we have that $|d(s_i,u) - d(s_i,x)|\ge 2$.
So, if $d(s_i,u)<d(s_i,x)$, then we derive
               $$ d(s_i,u)<d(s_i,u)+1\le d(s_i,x)-1\le d(s_i,e),$$
and if $d(s_i,x)<d(s_i,u)$, then we have
              $$ d(s_i,e) \le  d(s_i,x) <  d(s_i,u).$$
Thus, in both cases $e$ and $u$ are distinguished by a vertex from $S$.

So all the pairs of elements (vertices and edges) considered in the upper part are distinguished by an arbitrary metric basis. To conclude the proof, we need to construct a metric basis of cardinality $\lvert S \rvert $ that will also distinguish incident vertices and edges.

Suppose now that $u$ is an endpoint of $e$, say $u=x$. To distinguish $u$ and $e$ there needs to be a vertex $s \in S$ which is from the half-cube $Q_{d-1}$ that contains vertex $y$ and does not contain vertex $x$. To distinguish all such pairs there must be at least one vertex from the mixed metric generator in every half-cube $Q_{d-1}$. For any index $i \in \{1, \ldots, d\}$, there exists a vertex from a mixed metric generator having $0$ on the $i^{\rm th}$ coordinate, and a vertex from a mixed metric generator having $1$ on the $i^{\rm th}$ coordinate. In other words, a mixed metric basis does not have a column of zeroes or a column of ones at an arbitrary index $i$ (if we arrange all vectors of the mixed metric basis as a matrix with such vectors as the rows of such matrix).

We have started from an arbitrary metric basis $S$. Since $Q_d$ is a vertex transitive graph, we may assume that the vertex $s_1=(0,0,\ldots,0)$ (all coordinates equal to 0) is in $S$. If $S$ does not contain a column of zeroes, then $S$ is also a mixed metric basis. Otherwise, by Lemma \ref{lemma:two_columns}, there exists only one such column, say at index $i_0$. By Lemma \ref{lemma:antipodal_basis}, we know that we can replace any of the vertices from the set $S$ with its antipodal vertex and the incurred set $S'=S \setminus \{s\} \cup \{s'\}$ is a metric basis too, since the column at index $i_0$ (all zeroes) ensures that no two vertices in $S$ are antipodal to each other. Moreover, in view of Lemma \ref{lem:edim-less-dim}, $S$ is an edge metric generator as well.

There exist at least four different vertices $s_1=(0,0, \ldots, 0)$, $s_2$, $s_3$ and $s_4$ in the set $S$, since $\dim(Q_d)\geq 4$, for $d\geq 5$. We construct four sets $S_i'$ in the following way:
$$S_1'=\left( S \setminus \{s_1\} \right) \cup \{s_1'\},\;\;\;\; S_2'=\left( S \setminus \{s_2,s_3\} \right) \cup \{s_2', s_3'\}, $$
$$S_3'=\left( S \setminus \{s_2\} \right) \cup \{s_2'\}, \;\;\;\; S_4'=\left( S \setminus \{s_1,s_3\} \right) \cup \{s_1', s_3'\}, $$
and consider the next situations:

\noindent
{\bf (I):} If $S_1'$ is not a mixed metric generator, then there is a column of ones in $S_1'$ at some index $i_1$.

\noindent
{\bf (II):} If $S_2'$ is not a mixed metric generator, then there is a column of zeroes in $S_2'$ at some index $i_2$.

\noindent
{\bf (III):} If $S_3'$ is not a mixed metric generator, then there is a column of zeroes in $S_3'$ at some index $i_3$.

\noindent
{\bf (IV):} If $S_4'$ is not a mixed metric generator, then there is a column of ones in $S_4'$ at some index $i_4$.

Observe that all these indices $i_0$, $i_1$, $i_2$, $i_3$, and $i_4$ are different. If none of the four sets $S_i'$ defined above is a mixed metric generator, then the set initial $S$ looks as follows.

\begin{center}
\begin{tabular}{ c c | c c c c | c c c }
	   & $i_0$ & $i_1$ & $i_2$ & $i_3$ & $i_4$ &  & \ldots &  \\
	   \hline
$s_1:$ & 0 & 0 & 0 & 0 & 0 &  & \ldots &  \\
$s_2:$ & 0 & 1 & 1 & 1 & 1 &  & \ldots &  \\
$s_3:$ & 0 & 1 & 1 & 0 & 0 &  & \ldots &  \\
\hline
$s_4:$ & 0 & 1 & 0 & 0 & 1 &  & \ldots &  \\
\vdots & \vdots & \vdots & \vdots & \vdots & \vdots &   &  &   \\
$s_{|S|}:$ & 0 & 1 & 0 & 0 & 1 &  & \ldots &  \\
\end{tabular}
\end{center}

We now take a look at the columns $i_1$, $i_2$, $i_3$ and $i_4$. Let $v_1$ be a vertex having zeroes at all positions except at $i_1$ and $i_3$ and let $v_2$ be a vertex having zeroes at all positions except at $i_2$ and $i_4$. Then, $d(v_1,s)=d(v_2,s)$, for any vertex $s \in S$, a contradiction. Therefore, at least one of the sets $S_i'$ has to be a mixed metric generator, and therefore, the equality $\mdim(Q_d)=\dim(Q_d)$ follows since any mixed metric basis is also a metric basis.
\end{proof}

Theorems \ref{th:dim-edim} and \ref{th:dim-mdim} give us the following consequences, by using also the asymptotical result for the metric dimension of hypercubes known from \cite{Cantor1966}.

\begin{corollary}
For every $d\ge 3 $ it holds
$$\dim(Q_d) -1 \le  \edim(Q_d) \le \dim(Q_d) = \mdim(Q_d).$$
\end{corollary}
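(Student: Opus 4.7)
The plan is to derive the corollary as a direct bookkeeping consequence of the two main theorems just proved. First I would invoke Theorem \ref{th:dim-edim}, which says $\edim(Q_d)\le \dim(Q_d)\le \edim(Q_d)+1$ for every $d\ge 1$. The lower half of that chain gives $\edim(Q_d)\le \dim(Q_d)$ immediately, and rearranging the upper half gives $\dim(Q_d)-1\le \edim(Q_d)$. These two inequalities together yield the left portion of the desired chain.

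Next, I would apply Theorem \ref{th:dim-mdim} to obtain $\dim(Q_d)=\mdim(Q_d)$ for every $d\ge 3$, which is precisely the hypothesis imposed in the corollary. Concatenating the two resulting facts produces
$$\dim(Q_d)-1 \;\le\; \edim(Q_d) \;\le\; \dim(Q_d) \;=\; \mdim(Q_d),$$
as required.

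There is no substantive obstacle; the work has already been done in Theorems \ref{th:dim-edim} and \ref{th:dim-mdim}. The mention of the classical asymptotic estimate of \cite{Cantor1966} does not actually enter the derivation of the inequality itself. Rather, it furnishes the contextual observation that $\dim(Q_d)$ tends to infinity (asymptotically like $2d/\log_2 d$), so the additive discrepancy of at most one between $\edim(Q_d)$ and $\dim(Q_d)$ is negligible in the limit; this is what motivates the conjecture on asymptotic equality of all three parameters stated immediately after the corollary, but it is not needed inside the proof.
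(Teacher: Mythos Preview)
Your argument is correct and matches the paper's own reasoning: the corollary is stated there as an immediate consequence of Theorems~\ref{th:dim-edim} and~\ref{th:dim-mdim}, exactly as you derive it. Your observation that the asymptotic result of~\cite{Cantor1966} is not actually needed for this corollary (only for the subsequent asymptotic one) is accurate and sharpens the paper's slightly imprecise preamble.
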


\begin{corollary}
For every $d\ge 2$ it holds
$$\mdim(Q_d) \sim \edim(Q_d) \sim \dim(Q_d) \sim \frac{2d}{\log_2 d}.$$
\end{corollary}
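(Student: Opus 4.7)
The plan is a very short chain of implications using the two theorems just established together with the classical asymptotic $\dim(Q_d) \sim 2d/\log_2 d$ quoted from \cite{Cantor1966}. This asymptotic formula in particular forces $\dim(Q_d) \to \infty$ as $d \to \infty$, which is what makes additive constants negligible and drives the whole argument.

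First I would handle the edge metric dimension. Theorem \ref{th:dim-edim} yields the two-sided bound $\dim(Q_d) - 1 \le \edim(Q_d) \le \dim(Q_d)$. Dividing through by $\dim(Q_d)$ and letting $d \to \infty$ forces the ratio $\edim(Q_d)/\dim(Q_d)$ to $1$, so $\edim(Q_d) \sim \dim(Q_d)$.

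For the mixed case I would simply invoke Theorem \ref{th:dim-mdim}, which delivers the exact equality $\mdim(Q_d) = \dim(Q_d)$ for every $d \ge 3$; asymptotic equivalence is then immediate. The single excluded value $d = 2$ plays no role in an asymptotic statement, so nothing needs to be said about it beyond acknowledging that the range $d \ge 2$ of the corollary is really a ``for large $d$'' statement. Combining these two equivalences with $\dim(Q_d) \sim 2d/\log_2 d$ and the transitivity of $\sim$ produces the required chain.

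There is essentially no serious obstacle: the proof is a one-line consequence of the two theorems plus a quoted external result. The only point worth flagging as a sanity check is that the sandwich argument for $\edim(Q_d)$ genuinely requires $\dim(Q_d) \to \infty$, which is guaranteed by the Cantor bound. No additional computation is required.
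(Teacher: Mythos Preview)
Your proposal is correct and matches the paper's approach exactly: the paper states this corollary without proof, simply as a consequence of Theorems~\ref{th:dim-edim} and~\ref{th:dim-mdim} together with the asymptotic $\dim(Q_d)\sim 2d/\log_2 d$ from \cite{Cantor1966}. Your sandwich argument and the observation that $\dim(Q_d)\to\infty$ are precisely the one-line justification the paper leaves implicit.
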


We conclude this little paper with the following conjecture.

\begin{conjecture}
If $d$ is large enough, then $$\edim(Q_d)=\dim(Q_d).$$
 \end{conjecture}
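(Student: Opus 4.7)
Proposal. By Theorem \ref{th:dim-edim}, the equality $\edim(Q_d) = \dim(Q_d)$ is already established for every odd $d$, so the conjecture is only open for even $d$. I would argue by contradiction: assume $\edim(Q_d) = \dim(Q_d) - 1$ for some large even $d$, and let $S$ be a minimum edge metric generator, so that $|S| = \dim(Q_d) - 1$. By Lemma \ref{lem:edge-dim-equal-odd} the set $S$ must fail to distinguish some antipodal pair $u, v$, with $d(s, u) = d(s, v) = d/2$ for every $s \in S$. Applying vertex-transitivity of $Q_d$, one may assume $u = (0, \ldots, 0)$ and $v = (1, \ldots, 1)$, so that $S$ consists only of binary vectors of Hamming weight exactly $d/2$; call such a set \emph{balanced}. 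The conjecture thus reduces to showing that, for all sufficiently large even $d$, no balanced set $S \subseteq V(Q_d)$ of size $\dim(Q_d) - 1$ can be an edge metric generator.

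The key combinatorial description is that, for a balanced vector $s$ and any edge $e = (\beta, \beta \oplus \alpha_i)$, a direct computation yields $d(s, e) = |s \oplus \beta| - [s_i \ne \beta_i]$, i.e.\ the Hamming weight of $s \oplus \beta$ with coordinate $i$ removed. Hence $S$ being an edge metric generator amounts to the profile map $e \mapsto (d(s, e))_{s \in S}$ from the $d \cdot 2^{d-1}$ edges of $Q_d$ into $\{0, 1, \ldots, d-1\}^{|S|}$ being injective. I would first extract the elementary consequences of this injectivity. Considering the edges $e_i = (0^d, \alpha_i)$ incident with the origin, a balanced $s$ gives $d(s, e_i) = d/2 - s_i$, so $S$ separates $e_i$ from $e_j$ only if some $s \in S$ satisfies $s_i \ne s_j$. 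Considering additionally the pair $(0^d, \alpha_i)$ and $(\alpha_i, \alpha_i \oplus \alpha_j)$, a similar computation shows that the difference of the two distances from $s$ equals $1 - s_i - s_j$, and so some $s \in S$ must satisfy $s_i = s_j$. Stacking the elements of $S$ as rows of an $|S| \times d$ matrix, one obtains that the $d$ columns are pairwise neither equal nor complementary, but this yields only $|S| \ge 1 + \log_2 d$, far from the target $\dim(Q_d) \sim 2d/\log_2 d$.

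The main obstacle is precisely closing this gap between the elementary logarithmic bound and the required $2d/\log_2 d$. The plan is to upgrade injectivity of the profile map to a matching lower bound via a Lindstr\"om--Erd\H{o}s--R\'enyi-style double-counting, tailored both to balanced-weight generators and to edges rather than vertices. Concretely one could exploit that for balanced $s$ the parity of $d(s, e)$ is determined by $s_i$ and $|\beta|$, and try to show that the image of the profile map sits in a subset of $\{0, \ldots, d-1\}^{|S|}$ whose size is at most $(d/2)^{|S|}$ up to lower-order factors; comparing with the $d \cdot 2^{d-1}$ edges to be distinguished then forces $|S| \ge (1 + o(1)) \cdot 2d/\log_2 d$, matching $\dim(Q_d)$ and producing the desired contradiction. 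Making this entropy/volume estimate rigorous, or finding a genuinely different structural argument exploiting the balanced restriction, is the crux of the problem and almost certainly requires ideas beyond those developed in the rest of this note.
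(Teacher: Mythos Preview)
The statement you are attempting is a \emph{conjecture} in the paper, not a theorem; the paper offers no proof whatsoever (it merely records that $d=4$ shows the bound $d\ge 5$ is necessary). So there is no argument of the paper's to compare your proposal against.

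That said, your reduction is sound and is exactly the right first move. Lemma~\ref{lem:edge-dim-equal-odd} does give that any minimum edge metric generator $S$ with $|S|<\dim(Q_d)$ must miss only antipodal pairs, and vertex-transitivity lets you assume $S$ lies entirely in the middle layer. Your local computations are also correct: for the edges $e_i=(0^d,\alpha_i)$ one gets $d(s,e_i)=d/2-s_i$, and the pair $(0^d,\alpha_i)$ versus $(\alpha_i,\alpha_i\oplus\alpha_j)$ forces some $s$ with $s_i=s_j$, so the column vectors of the $|S|\times d$ matrix are pairwise distinct and pairwise non-complementary. This indeed yields only $|S|\ge 1+\log_2 d$.

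The genuine gap is exactly the one you name yourself. The entropy/volume heuristic you sketch---that the profile map lands in a set of size roughly $(d/2)^{|S|}$---is not obviously correct: for a \emph{fixed} $s$ the range of $d(s,e)$ over all edges is $\{0,\ldots,d-1\}$, not $\{0,\ldots,d/2-1\}$, and the parity constraint you mention (that $d(s,e)$ has parity determined by $s_i$ and $|\beta|$) cuts the range only by a factor of~$2$ per coordinate, which recovers nothing beyond the naive $d^{|S|}\ge d\cdot 2^{d-1}$. Getting from there to a lower bound matching $\dim(Q_d)\sim 2d/\log_2 d$ would require, as you say, a Lindstr\"om-type argument adapted to the balanced restriction, and no such argument is known. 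Your proposal is therefore an honest outline of an open problem, not a proof, and you correctly flag it as such.
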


As the above conjecture does not hold for $d=4$, $d$ must be at least 5.

\section*{Acknowledgements}

A. Kelenc has been partially supported by the Slovenian Research Agency by the projects No. J1-1693 and J1-9109. R. \v Skrekovski acknowledges the Slovenian research agency ARRS, program no.\ P1--0383 and project no. J1-1692. Ismael G. Yero has been partially supported by the Spanish Ministry of Science and Innovation through the grant PID2019-105824GB-I00.

\end{document}